\title{Steep uncountable groups}
\author{Samuel M. Corson}
\author{Alexander Olshanskii}
\author{Olga Varghese}
\theoremstyle{definition}\newtheorem{theorem}{Theorem}
\theoremstyle{definition}
\theoremstyle{definition}
\theoremstyle{definition}
\theoremstyle{definition}
\theoremstyle{definition}
\theoremstyle{definition}
\theoremstyle{definition}\newtheorem{remark}[theorem]{Remark}
\theoremstyle{definition}
\theoremstyle{definition}\newtheorem{lemma}[theorem]{Lemma}
\theoremstyle{definition}
\theoremstyle{definition}
\theoremstyle{definition}
\theoremstyle{definition}
\theoremstyle{definition}
\newcommand{\Length}{\operatorname{Length}}
\begin{document}

\address{Matematika Saila, UPV/EHU, Sarriena S/N, 48940, Leioa - Bizkaia, Spain.}
\email{sammyc973@gmail.com}

\address{Department of Mathematics, Vanderbilt University, Nashville, Tennessee 37240.}
\email{alexander.olshanskiy@vanderbilt.edu}

\address{Institute of Mathematics, Heinrich-Heine-University D\"usseldorf, Universit\"atsstra{\upshape{\ss}}e 1, 40225, D\"usseldorf, Germany.}
\email{olga.varghese@hhu.de}

\keywords{Artinian property, J\'onsson groups, strongly bounded groups}
\subjclass[2010]{Primary: 03E75, 20A15, 20E15}
\thanks{The work of the first author was supported by the Basque Government Grant IT1483-22 and Spanish Government
Grants PID2019-107444GA-I00 and PID2020-117281GB-I00, and also by the Heilbronn Institute for Mathematical Research and the Fields Institute.  The work of the second author was supported in part by the NSF grant DMS 1901976.  The work of the third author was funded by DFG grant VA 1397/2-2.}

\begin{abstract}  We produce a simple group $G$ of cardinality $\aleph_1$ which is Artinian (every strictly descending chain of subgroups is finite), satisfies a Burnside law and such that for each uncountable subset $Y \subseteq G$ there exists a natural number $n_Y$ for which every element of $G$ may be expressed as a product of length at most $n_Y$ of elements in $Y^{\pm 1}$.  

In particular this group is J\'onsson (every proper subgroup is of strictly smaller cardinality) and strongly bounded (every abstract action on a metric space has bounded orbits); this is the first example of an uncountable group having both of these properties which is constructed without using the continuum hypothesis.  The group $G$ can also be made so that all subgroups are simple and all nontrivial subgroups are malnormal in $G$.

\end{abstract}

\maketitle

\begin{section}{Introduction}

We construct uncountable groups having unusual properties.  The main theorem is the following.

\begin{theorem}\label{maintheorem}  There exists a simple group $G$ such that

\begin{enumerate}

\item $|G| = \aleph_1$;

\item for any uncountable $Y \subseteq G$ there exists a natural number $n_Y \in \omega$ such that each element of $G$ can be written as a product, of length at most $n_Y$, of elements in $Y^{\pm 1}$;

\item each subgroup of $G$ is cyclic or simple; 

\item if $K \leq G$ is not a proper subgroup of a cyclic subgroup of $G$ then $K$ is malnormal, that is $K \cap gKg^{-1}=\{1_G\}$ for every $g\in G \setminus K$; and

 \item $G$ can be taken

\begin{enumerate}[(a)]

\item to satisfy a Burnside law $x^M = 1_G$ (where $M$ is a large enough odd integer) and be Artinian (every strictly descending chain of subgroups is finite), or

\item to be torsion-free and every strictly descending chain of non-cyclic subgroups is finite.

\end{enumerate}
\end{enumerate}
\end{theorem}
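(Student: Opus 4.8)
The plan is to realize $G$ as a directed union $G=\varinjlim_{\alpha<\omega_1}G_\alpha=\bigcup_{\alpha<\omega_1}G_\alpha$ of countable groups, built by transfinite recursion of length $\omega_1$, where each passage $G_\alpha\rightsquigarrow G_{\alpha+1}$ is an Olshanskii-style graded (relative) small cancellation quotient of a free product $G_\alpha\ast F_\alpha$ with a countably infinite free factor $F_\alpha$ supplying the new elements needed to reach cardinality $\aleph_1$, and at limit stages one takes the obvious direct limit. The guiding principle is that every finite approximation should be (relatively) hyperbolic and that each batch of relators be geodesically long and mutually low-overlapping, so that the hyperbolicity, the torsion structure, and the embeddings $G_\alpha\hookrightarrow G_{\alpha+1}$ all survive into the limit. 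For the Burnside variant (5a) one works over the free Burnside group of large odd exponent $M$, imposing $x^{M}=1_G$ throughout as in the Adian--Olshanskii--Lysenok treatment; for the torsion-free variant (5b) one omits these relators and keeps the construction torsion-free.

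The relators are scheduled so that, over the course of the recursion, three families of requirements are met for every configuration that appears at some finite stage. First, Tarski-monster-type relators: whenever two elements $a,b$ of some $G_\alpha$ fail to lie in a common cyclic subgroup, a relator is introduced forcing $\langle a,b\rangle$ to absorb a prescribed piece, the cumulative effect being that in the limit every non-cyclic subgroup is generated by such absorbing pairs and is forced to be simple, while cyclic subgroups remain the only alternative; this yields the dichotomy (3). Second, conjugacy relators collapsing $K\cap gKg^{-1}$ for non-cyclic $K$ and $g\notin K$, giving malnormality (4); simplicity of $G$, and of each non-cyclic subgroup, follows because each nontrivial normal closure is forced to exhaust its ambient simple piece. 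These are routine adaptations of the graded small cancellation machinery once the bookkeeping below schedules every configuration.

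The heart of the argument, and the one place where genuine work is needed to avoid the continuum hypothesis, is the steepness property (2). Since each $G_\alpha$ is countable, a set $Y\subseteq G$ is uncountable precisely when $Y\not\subseteq G_\alpha$ for every $\alpha$, so $Y$ meets $G_{\alpha+1}\setminus G_\alpha$ for an uncountable cofinal set $S\subseteq\omega_1$ of stages. Fixing $n\in\omega$ in advance, the aim is to impose at each stage a scheduled family of length-$n$ product relations of the form $g=w(y_1,\dots,y_{n-1},y)$, where $g$ and $y_1,\dots,y_{n-1}$ range over the \emph{countable} group $G_\alpha$ and $y$ is new at stage $\alpha$, chosen so that (i) the family of admissible tuples for each target $g$ is rich enough that every uncountable $Y$ contains one tuple with all entries in $Y$, yet (ii) no single finite tuple is admissible for so many targets that it would generate $G_\alpha$ with bounded word length (which would force finiteness). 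Granting such a schedule, one fixes an uncountable $Y$ and a target $g\in G_\beta$, chooses stages $\beta<\alpha_1<\dots<\alpha_n$ in $S$ with $y_i\in Y\cap(G_{\alpha_i+1}\setminus G_{\alpha_i})$, and extracts at stage $\alpha_n$ an admissible tuple for $g$ all of whose entries lie in $Y$, expressing $g$ as a product of at most $n$ elements of $Y^{\pm1}$; here the extraction uses only a $\Delta$-system/pigeonhole argument and the regularity of $\omega_1$ applied to $\{\,Y\cap(G_{\alpha+1}\setminus G_\alpha):\alpha\in S\,\}$. Since only countably many configurations of the countable group $G_\alpha$ are scheduled at each stage, no enumeration of the uncountably many candidate sets $Y$ is ever required, and this is exactly what replaces Shelah's continuum-hypothesis bookkeeping; one then sets $n_Y:=n$ uniformly, from which the J\'onsson and strongly bounded conclusions of the abstract follow formally.

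I expect the main obstacle to be the simultaneous fulfillment of all three relator families while keeping the graded small cancellation, equivalently relative hyperbolicity, hypotheses intact: the covering requirement (i) makes the steepness relators plentiful, whereas the non-collapse requirement (ii) together with the protection of subgroup structure, torsion, and the embeddings demands that relators be long, sparse, and low-overlapping. Reconciling these, most plausibly by interleaving the steepness relators with the Tarski and malnormality relators across infinitely many grades and by choosing the product expressions to be geodesically long in the evolving Cayley geometry despite encoding short factorizations, is the delicate technical point, and it is where the precise combinatorial form of the admissible-tuple schedule must be engineered. Granting that balance, $|G|=\aleph_1$ is immediate from an increasing $\omega_1$-chain of non-stabilizing countable groups, giving (1); simplicity, the subgroup dichotomy, and malnormality pass to the limit of the staged quotients; and the two cases of (5) come from including or omitting the exponent relators, case (5a) being Artinian because the exponent bound forces cyclic subgroups to be finite so that the dichotomy (3) terminates every descending chain, and case (5b) satisfying the stated descending chain condition on non-cyclic subgroups by the same dichotomy restricted away from the torsion-free cyclic subgroups.
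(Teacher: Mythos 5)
Your overall skeleton --- an increasing $\omega_1$-chain of countable groups with Olshanskii-style graded small cancellation quotients at successor stages and direct limits at limit stages --- is the same as the paper's, and your handling of (1), (3), (4) and (5) is roughly consonant with it. But there is a genuine gap at what you yourself call the heart of the argument, property (2), and it occurs in two places. First, your covering requirement (i) is unachievable as stated: the relations imposed at stage $\alpha$ can only involve elements of the countable group $G_\alpha$ together with the designated new generator of that stage, so every ``admissible tuple'' necessarily contains at least one designated element. An adversarial uncountable $Y$ can simply avoid all designated elements (choose one non-designated element from each $G_{\alpha+1}\setminus G_\alpha$), and then $Y$ contains no admissible tuple for any target, so your extraction never starts. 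The paper closes this hole by building a $2$-generation property into its embedding lemma (any nontrivial $h_0\in G_\gamma$ together with any $h_1\in G_{\gamma+1}\setminus G_\gamma$ generates $G_{\gamma+1}$): applied to arbitrary elements of $Y$, this captures the designated generator $c_\alpha$ inside $Y^{[k_\alpha]}$ for some finite but \emph{unbounded} $k_\alpha$, after which a pigeonhole argument (a countable union of countable sets cannot be uncountable) produces one $n$ with $Y^{[n]}$ containing uncountably many $c_\alpha$. This is also exactly why the paper's $n_Y$ depends on $Y$; your uniform $n_Y:=n$ is precisely what this route cannot deliver (uniform bounds are only known under CH, via Shelah and Banakh).

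Second, even granting access to uncountably many designated generators, your claim that the extraction ``uses only a $\Delta$-system/pigeonhole argument and the regularity of $\omega_1$'' and that countable scheduling ``is exactly what replaces Shelah's continuum-hypothesis bookkeeping'' misses the real combinatorial engine. What one needs is a single function $J:E\times E\to E$ on a set $E$ of size $\aleph_1$ such that $J(Z\times Z)=E$ for \emph{every} uncountable $Z\subseteq E$; this is Todor\v{c}evi\'c's strong coloring theorem $\aleph_1\not\rightarrow[\aleph_1]^2_{\aleph_1}$, a deep ZFC result (proved by walks on ordinals), not something obtainable from $\Delta$-system or pigeonhole reasoning. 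The paper's relations are scheduled so that a fixed group word $W$ satisfies $W(g_0,g_1,c_\gamma)=J(g_0,g_1)$ for all nontrivial $g_0,g_1\in G_\gamma$; then, with $Z$ the uncountable set of designated generators caught in $Y^{[n]}$, surjectivity of $J$ on $Z\times Z$ reaches every target $g$ at word-length cost $n\cdot\Length(W)$. Without this ingredient (or an equivalent), your requirement (i) is not merely unverified but is of the same logical strength as the theorem you are trying to prove. A lesser issue: in (5)(a), the dichotomy (3) plus finiteness of cyclic subgroups does not by itself terminate descending chains (simple groups can a priori contain infinite descending chains of simple subgroups); one needs, as in the paper, that every proper subgroup of $G_{\gamma+1}$ is finite cyclic or conjugate into $G_\gamma$, that proper subgroups at limit stages drop into earlier stages, and that the J\'onsson property pushes every proper subgroup of $G$ into some countable, inductively Artinian $G_\alpha$.
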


The group given by (5) (a) is ``steep'' in its upward and downward behavior: from any uncountable subset the group will be generated in only finitely many steps, and one can spend only finitely many steps passing into proper subgroups.  We can also take the exponent $M$ in (5) (a) to be a prime, so that every subgroup of $G$ is simple by (3) and every nontrivial subgroup of $G$ is malnormal by (4).  A group of cardinality $\aleph_1$ has property (2) above if and only if it satisfies both of the following two properties (see Lemma \ref{stronglyboundedequiv}).  A group $H$ is

\begin{itemize}

\item \emph{J\'onsson} if every proper subgroup of $H$ is of strictly smaller cardinality than $|H|$;

\item \emph{strongly bounded} if every abstract action of $H$ by isometries on a metric space has bounded orbits.

\end{itemize}

\noindent Obraztsov constructed a simple J\'onsson group satisfying (1) and (5) (a) (see \cite{Ob000} or \cite[Corollary 35.4]{Ol}), and properties (3) and (4) can also be checked for his construction.  We shall use a modification of his argument to obtain our results.

In \cite{Sh} Shelah used the continuum hypothesis to construct a torsion-free simple group $\mathcal{S}$ of cardinality $\aleph_1$ for which there exists a natural number $\mathfrak{n}$ so that given any uncountable $Y \subseteq \mathcal{S}$ every element $s \in \mathcal{S}$ may be written as a product of length at most $\mathfrak{n}$ of elements of $Y$.  Note that inverses of elements of $Y$ are not used (his group is a \emph{J\'onsson semigroup}).  Recent innovations of Banakh \cite{Ban} have reduced the constant $\mathfrak{n}$ drastically, while still using the continuum hypothesis.  The comparable torsion-free result of our paper is the variation (5) (b).  Our proof only uses the standard ZFC axioms (without the continuum hypothesis) while incurring two costs:  the number $n_Y$ depends on $Y$ and our generation of the group also utilizes inverses of elements in the set $Y$.  In the torsion variation (5) (a) the group is a J\'onsson semigroup since $g^{-1} = g^{M-1}$.

Shelah also gave a J\'onsson group of cardinality $\aleph_1$ in \cite{Sh} whose construction uses only ZFC.  It was not known until recently \cite{CoSh} whether there could be strongly bounded groups of cardinality $\aleph_1$ constructed using only ZFC.  Our groups are the first known cardinality $\aleph_1$ examples which are both J\'onsson and strongly bounded produced from ZFC alone.

We point out that there is no countably infinite group satisfying the analogue of property (2):  it is an immediate consequence of  \cite[Lemma 5]{Pr} that for every countably infinite group $G$ there exists an infinite subset $Y \subseteq G$ such that for every $n \in \omega$ there is an element $g \in G$ which cannot be written as a word in $Y^{\pm 1}$ of length at most $n$.  This difference can be seen as a group-theoretic reflection of the Todor\v{c}evi\'c strong coloring theorem $\aleph_1 \not\rightarrow [\aleph_1]^2_{\aleph_1}$ and Ramsey's theorem $\aleph_0 \rightarrow (\aleph_0)^2_2$ (see respectively \cite{Tod} and \cite[Theorem 9.1]{J}) and indeed these combinatorics theorems are used in the proofs of the respective uncountable and countable behavior.

\end{section}

\begin{section}{Proof of the Main Theorem}

We justify the connection between property (2) of Theorem \ref{maintheorem} and the conjunction of being strong bounded and J\'onsson, and afterwards we prove Theorem \ref{maintheorem}.  If $H$ is a group and $Y \subseteq H$ we let

\begin{center}
$Y^{[n]} := \{g_1^{\varepsilon_1} \cdots g_{m}^{\varepsilon_{m}} \mid 0 \leq m \leq n \text{ and } (\forall 1 \leq i \leq m)[ g_i \in Y \text{ and }\varepsilon_i \in \{-1, 1\}]\}$
\end{center}

\noindent and

\begin{center}
$\mathcal{G}(Y) := Y^{[2]}$.
\end{center}

\noindent The following is \cite[Proposition 2.7]{dC}.

\begin{lemma}\label{Cornulier}  A group $H$ is strongly bounded if and only if for any sequence $\{Y_n\}_{n \in \omega}$ of subsets of $H$ such that $\mathcal{G}(Y_n) \subseteq Y_{n + 1}$ and $\bigcup_{n \in \omega} Y_n = H$ we have $Y_N = H$ for some $N \in \omega$.

\end{lemma}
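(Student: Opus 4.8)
The plan is to prove the two implications separately, in each case translating between an isometric action and the combinatorial filtration condition.

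For the direction ($\Leftarrow$) I would assume the filtration condition and let $H$ act by isometries on a metric space $(X,d_X)$ with some basepoint $x_0$. Setting $Y_n := \{g \in H : d_X(gx_0,x_0) \le 2^n\}$, I would check that $\mathcal{G}(Y_n) \subseteq Y_{n+1}$: inversion preserves the distance to $x_0$ (apply the isometry $g$ to $d_X(g^{-1}x_0,x_0)$), so $Y_n = Y_n^{-1}$, and for a length-two product the triangle inequality together with left-invariance gives $d_X(g^{\varepsilon}h^{\delta}x_0,x_0) \le 2^n + 2^n = 2^{n+1}$; the doubling $2^n \mapsto 2^{n+1}$ is exactly what absorbs one multiplication. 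Since every orbit point lies at finite distance, $\bigcup_n Y_n = H$, so the hypothesis yields $Y_N = H$ for some $N$, meaning the orbit of $x_0$ has diameter at most $2^{N+1}$. As the action was arbitrary, $H$ is strongly bounded.

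For the direction ($\Rightarrow$) I would argue the contrapositive: given a filtration with $\mathcal{G}(Y_n) \subseteq Y_{n+1}$ and $\bigcup_n Y_n = H$ but $Y_n \neq H$ for every $n$, I will manufacture an isometric action of $H$ with an unbounded orbit, contradicting strong boundedness. First note $Y_n \subseteq Y_n^{[2]} = \mathcal{G}(Y_n) \subseteq Y_{n+1}$, so the sequence is increasing, and define the length $\ell(g) := \min\{n : g \in Y_n\}$. This is finite (exhaustion), unbounded (since each $Y_n \neq H$ supplies a $g$ with $\ell(g) > n$), and the containment $\mathcal{G}(Y_n)\subseteq Y_{n+1}$ gives the two ultrametric-type estimates $\ell(gh) \le \max(\ell(g),\ell(h)) + 1$ and $\ell(g^{-1}) \le \ell(g)+1$.

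The main obstacle is that the obvious candidate $d(g,h) := \ell(g^{-1}h)$ is \emph{not} a metric: it fails symmetry (since $\ell(x)$ and $\ell(x^{-1})$ may differ) and the triangle inequality holds only up to an additive constant. I would repair symmetry by passing to $\bar\ell(x) := \min(\ell(x),\ell(x^{-1}))$, which satisfies $\bar\ell(x)=\bar\ell(x^{-1})$, remains unbounded (as $|\ell(x)-\ell(x^{-1})|\le 1$ forces $\bar\ell \ge \ell - 1$), and still obeys a sub-max estimate $\bar\ell(ab) \le \max(\bar\ell(a),\bar\ell(b)) + C$ with a fixed constant $C$ (one checks $C=2$). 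Then I would define the largest left-invariant metric dominated by $f$, namely
\[
d(g,h) := \inf\Big\{\textstyle\sum_{i=1}^{k} f(x_{i-1}^{-1}x_i) : g=x_0,\dots,x_k=h\Big\}, \qquad f(x):=\bar\ell(x)+C \ \ (x\neq 1_H),\ f(1_H):=0 .
\]
This $d$ is left-invariant by construction and symmetric because $\bar\ell$ is, and it is a genuine metric since every nontrivial step contributes at least $C>0$. The crux is the lower bound $d(g,1_H)\gtrsim \bar\ell(g)$: iterating the sub-max estimate shows any factorization $g=s_1\cdots s_k$ satisfies $\max_i \bar\ell(s_i) \ge \bar\ell(g) - C(k-1)$, while $\sum_i f(s_i) \ge \max_i \bar\ell(s_i) + Ck$; choosing the additive constant in $f$ equal to the sub-max constant makes the per-factor penalty dominate, so $\sum_i f(s_i) \ge \bar\ell(g)+C$ for every factorization and hence $d(g,1_H) \ge \bar\ell(g)$. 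Since $\bar\ell$ is unbounded, the left-regular action of $H$ on $(H,d)$ has the unbounded orbit $\{d(g,1_H):g\in H\}$, contradicting strong boundedness and forcing $Y_N = H$ for some $N$. Matching these additive constants so that the lower bound survives is the delicate point of the argument; the rest is bookkeeping.
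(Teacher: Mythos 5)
Your proposal is correct, but there is nothing in the paper to compare it against line by line: the paper does not prove this lemma at all, it simply quotes it as Proposition 2.7 of de Cornulier's \emph{Strongly bounded groups and infinite powers of finite groups}. So what you have done is supply a self-contained proof of a cited black box. Your easy direction is the standard one (the filtration $Y_n = \{g : d_X(gx_0,x_0)\le 2^n\}$, where the doubling absorbs the triangle inequality), and your hard direction reconstructs, in your own way, the core of de Cornulier's argument that a filtration violating the conclusion yields an unbounded left-invariant metric. The constant-matching device is the part I checked most carefully, and it works: with $C=2$ one has $\ell(a)\le\bar\ell(a)+1$, hence $\bar\ell(ab)\le\ell(ab)\le\max(\ell(a),\ell(b))+1\le\max(\bar\ell(a),\bar\ell(b))+2$; iterating linearly gives $\bar\ell(s_1\cdots s_k)\le\max_i\bar\ell(s_i)+C(k-1)$, and then for any chain with all steps nontrivial, $\sum_i f(s_i)=\sum_i\bar\ell(s_i)+Ck\ge\max_i\bar\ell(s_i)+Ck\ge\bar\ell(g)-C(k-1)+Ck=\bar\ell(g)+C$, so $d(g,1_H)\ge\bar\ell(g)$ and the left-regular orbit is unbounded. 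Two points you should make explicit when writing this up: first, in the crux inequality one must first delete the trivial steps $s_i=1_H$ from a chain (they contribute $0$ to the sum and nothing to the telescoping product, so this is harmless, but the inequality $\sum_i f(s_i)\ge\max_i\bar\ell(s_i)+Ck$ as stated is false if trivial steps are counted in $k$); second, in the easy direction either quantify over the basepoint $x_0$ or invoke the standard fact that for isometric actions one bounded orbit forces all orbits to be bounded, via $d_X(gx,x)\le 2d_X(x,y)+d_X(gy,y)$. Neither is a gap, just bookkeeping worth recording.
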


As is standard, an ordinal is the set of all ordinals which are below it, so for example $3 = \{0, 1, 2\}$ and $\omega + 1 = \{0, 1, \ldots, \omega\}$ and the ordering on an ordinal coincides precisely with the membership relation $\in$.  A cardinal is the smallest ordinal of its cardinality, so $\aleph_1$ is the set of all countable ordinals.  A subset $X$ of an ordinal $\alpha$ is \emph{unbounded} if there is no $\beta \in \alpha$ such that $\beta$ is strictly above all elements in $X$.  Recall that the \emph{cofinality} of an ordinal $\alpha$ is the smallest cardinality of an unbounded subset of $\alpha$ (some examples: the cofinality of a successor $\alpha + 1$ is $1$ as the set $\{\alpha\}$ is unbounded in $\alpha + 1$, and the cofinality of $\aleph_1$ is $\aleph_1$).

\begin{lemma}\label{stronglyboundedequiv}  Suppose that $H$ is a group with $|H|$ of uncountable cofinality.  The following are equivalent:

\begin{enumerate}

\item $H$ is strongly bounded and J\'onsson;

\item for any $Y \subseteq H$ with $|Y| = |H|$ there exists some $n_Y \in \omega$ such that $H = Y^{[n_Y]}$.

\end{enumerate}

\end{lemma}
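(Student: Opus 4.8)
The plan is to prove the two implications separately, in each case routing the strongly bounded half of condition (1) through the characterization in Lemma \ref{Cornulier}. Throughout I would write $\kappa = |H|$ and record three bookkeeping facts about the notation: that $Y^{[n]} \subseteq Y^{[n']}$ whenever $n \leq n'$ (the empty product $1_H$ lies in every $Y^{[n]}$), that each $Y^{[a]}$ is symmetric so that $(Y^{[a]})^{[b]} = Y^{[ab]}$, and hence that iterating $\mathcal{G} = (\,\cdot\,)^{[2]}$ gives $\mathcal{G}^{k}(Y) = Y^{[2^k]}$. The only essential use of the uncountable cofinality hypothesis will occur in one place, flagged at the end.

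For $(2) \Rightarrow (1)$ I would first verify the J\'onsson property. If $K \leq H$ were a proper subgroup with $|K| = \kappa$, then applying (2) to $Y = K$ would give some $n_K$ with $H = K^{[n_K]}$; but $K$ is closed under products and inverses, so $K^{[n_K]} \subseteq K$, forcing $H = K$ and contradicting properness. For strong boundedness I would invoke Lemma \ref{Cornulier}: given any sequence $\{Y_n\}_{n \in \omega}$ with $\mathcal{G}(Y_n) \subseteq Y_{n+1}$ and $\bigcup_{n} Y_n = H$, the uncountable cofinality of $\kappa$ guarantees that some $Y_m$ already satisfies $|Y_m| = \kappa$ (a countable union of sets of size $< \kappa$ is not cofinal in $\kappa$, hence has size $< \kappa$, and so could not equal $H$). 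Applying (2) to this $Y_m$ produces an $n$ with $Y_m^{[n]} = H$; choosing $k$ with $2^k \geq n$ and iterating the inclusion $\mathcal{G}(Y_j) \subseteq Y_{j+1}$ yields $H = Y_m^{[n]} \subseteq Y_m^{[2^k]} \subseteq Y_{m+k}$, so $Y_{m+k} = H$, as Lemma \ref{Cornulier} requires.

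For $(1) \Rightarrow (2)$, let $Y \subseteq H$ with $|Y| = \kappa$ and set $Y_0 = Y$ and $Y_{n+1} = \mathcal{G}(Y_n)$, so $Y_n = Y^{[2^n]}$. Then $\bigcup_{n} Y_n = \bigcup_{n} Y^{[2^n]} = \langle Y \rangle$, and since $\kappa = |Y| \leq |\langle Y \rangle| \leq |H| = \kappa$ forces $|\langle Y \rangle| = \kappa$, the J\'onsson property rules out $\langle Y \rangle$ being a proper subgroup, giving $\bigcup_{n} Y_n = H$. As $\mathcal{G}(Y_n) \subseteq Y_{n+1}$ holds by construction, Lemma \ref{Cornulier} together with the strong boundedness of $H$ yields $Y_N = H$ for some $N$; setting $n_Y = 2^N$ completes this direction.

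The routine content is the monoid-style bookkeeping of the $Y^{[n]}$ operation, which I would dispatch quickly. The one genuinely delicate point---and the only place the hypothesis on the cofinality of $|H|$ enters---is the claim in $(2) \Rightarrow (1)$ that a countable increasing union covering $H$ must contain a member of full cardinality; without uncountable cofinality this step fails and the equivalence can break down, so I expect this to be the crux of the argument.
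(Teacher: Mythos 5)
Your proposal is correct and follows essentially the same route as the paper: both directions run through Lemma \ref{Cornulier}, the J\'onsson property is obtained by applying (2) to a full-cardinality subgroup, and the $(1) \Rightarrow (2)$ direction uses the sequence $Y_n = Y^{[2^n]}$ exactly as in the paper's proof, with the uncountable cofinality hypothesis entering only to extract a term $Y_m$ of full cardinality. The only difference is cosmetic: you spell out the bookkeeping identities for $Y^{[n]}$ and the cofinality argument, which the paper leaves implicit.
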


\begin{proof} (1) $\Rightarrow$ (2)  Let $Y \subseteq H$ with $|Y| = |H|$.  Since $H$ is J\'onsson and $|H| = |Y| \leq |\langle Y \rangle| \leq |H|$ we see that $\langle Y \rangle = H$.  For each $n \in \omega$ we let $Y_n = Y^{[2^n]}$ and notice that $\bigcup_{n \in \omega} Y_n = \langle Y \rangle = H$ and $\mathcal{G}(Y_n) \subseteq Y_{n + 1}$.  Since $H$ is strongly bounded there exists some $N \in \omega$ for which $Y_N = H$ by Lemma \ref{Cornulier}, so $Y^{[2^N]} = H$.

(2) $\Rightarrow$ (1)  We let $L \leq H$ with $|L| = |H|$.  Then there exists some $n_L \in \omega$ such that $H = L^{[n_L]}$, but since $L$ is a group we see that $L = L^{[n_L]} = H$ and so $H$ is J\'onsson.  Next we let $\{Y_n\}_{n\in \omega}$ be a sequence of subsets of $H$ with $\mathcal{G}(Y_n) \subseteq Y_{n + 1}$ and $\bigcup_{n\in \omega} Y_n = H$.  Since $|H|$ is of uncountable cofinality there must exist some $m \in \omega$ with $|Y_m| = |H|$.  Now select $n_{Y_m} \in \omega$ for which $Y_m^{[n_{Y_m}]} = H$ and notice that $H = Y_m^{[n_{Y_m}]} \subseteq Y_{m + n_{Y_m} + 1}$, and $H$ is seen to be strongly bounded by Lemma \ref{Cornulier}.

\end{proof}

The principal innovation required to prove the main theorem is the following embedding lemmas (cf. \cite[Theorem 35.1]{Ol} and \cite[Lemma 2.4]{CoSh}), whose proof we give at the end of the paper.  Lemma \ref{correct} is used in providing the group satisfying (5) (a) and Lemma \ref{correct2} is used for (5) (b).

\begin{lemma}\label{correct}  There is a group word $W(x_0, x_1, y)$ such that the following holds.  If $M$ is a sufficiently large odd number (say $M > 10^{75}$), $L$ is a nontrivial countable group satisfying the law $x^M = 1$ and $f:  (L \setminus \{1_L\})^2 \rightarrow L \setminus \{1_L\}$ is a function, then there exists a countably infinite simple group $H$ and $c\in H$ such that

\begin{enumerate}[(a)]

\item $L \leq H$;

\item $c \in H\setminus L$;

\item $H = \langle L \cup \{c\}\rangle$;

\item for all $(g_0, g_1) \in (L \setminus \{1_L\})^2$ we have $W(g_0, g_1, c) = f(g_0, g_1)$;

\item if $h_0 \in L \setminus \{1_L\}$ and $h_1 \in H \setminus L$ then $H = \langle h_0, h_1 \rangle$;

\item every proper subgroup of $H$ is either cyclic of order dividing $M$ or is conjugate in $H$ to a subgroup of $L$; and

\item if $h \in H \setminus L$ then $L \cap hLh^{-1} = \{1_H\}$ and if $h \in H$ is not conjugate to an element of  $L$ then the centralizer in $H$ of $h$ is cyclic and malnormal in $H$.

\end{enumerate}

\end{lemma}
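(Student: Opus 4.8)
The plan is to build $H$ as a quotient of the free product $L * \langle c \rangle$ by imposing relators via small-cancellation theory over the given group $L$, in the spirit of Olshanskii's graded diagram method (cf. \cite[Theorem 35.1]{Ol}). The relators fall into several families, each responsible for one of the clauses (a)--(g). First, to realize clause (d) I would fix the word $W(x_0,x_1,y)$ in advance (a product of high powers of $x_0, x_1, y$ engineered to have large cancellation-free pieces when $c$ is substituted for $y$), and for each pair $(g_0,g_1) \in (L\setminus\{1_L\})^2$ impose the relator $W(g_0,g_1,c) \cdot f(g_0,g_1)^{-1}$. Since $L$ is countable there are only countably many such relators, keeping $H$ countably infinite. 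Second, to force simplicity and the subgroup dichotomy (f), and to make every element of $H\setminus L$ generate $H$ together with any nontrivial element of $L$ (clause (e)), I would add relators expressing that suitable products of conjugates collapse, following the standard enumeration-of-potential-subgroups technique used to produce simple groups with prescribed proper subgroups.

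The execution proceeds by a transfinite (here merely countable) inductive construction of an increasing sequence of presentations, at each stage adding finitely many relators of the next rank while verifying the graded small-cancellation condition $C'(\lambda)$ (or Olshanskii's condition $A$) so that the partial groups remain infinite and the elements of $L$ retain their orders and distinctness. The key lemmas from \cite{Ol} guarantee that in a group defined by such a presentation, every element either is conjugate into one of the ``free'' factors or has a cyclic centralizer, and that pieces of the defining relators cannot be too long; these are exactly what yield (f) and the centralizer statement in (g). Imposing the law $x^M=1$ is handled by working from the start in the variety of groups of exponent $M$ (the free Burnside group $B(\infty,M)$ for large odd $M$ is infinite by the Novikov--Adian / Olshanskii theorem), so the cyclic proper subgroups automatically have order dividing $M$, completing (f).

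For the malnormality assertions in (g) I would argue that if $h\in H\setminus L$ then a nontrivial element of $L\cap hLh^{-1}$ would give a relation between conjugated $L$-syllables that the small-cancellation hypothesis forbids once the conjugating word $h$ is reduced to its normal form; this reduces to showing no nontrivial element of $L$ is conjugate into $L$ by an element outside $L$, which the diagram method delivers because any annular diagram witnessing such a conjugacy would contain a cell contradicting the cancellation condition. The second half of (g) follows because an element $h$ not conjugate into $L$ is, after the construction, a ``regular'' element whose centralizer is cyclic by the standard centralizer lemma, and malnormality of that cyclic centralizer is the generic behavior in these Olshanskii-type groups.

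The hard part will be the simultaneous coordination of all the relator families: the relators enforcing (d) are chosen by the adversarial function $f$ and are \emph{not} under our control, so I must verify that they, together with the simplicity and generation relators, still jointly satisfy the graded small-cancellation condition. This is precisely where the specific shape of the word $W$ matters — it must be designed so that for \emph{every} choice of $g_0,g_1$ and \emph{every} value $f(g_0,g_1)$ the resulting relator has only short pieces in common with all other relators, including those of the other families and including cyclic shifts of itself for different parameter values. Establishing this uniform piece-length bound, independent of $f$, is the technical heart of the lemma and the step I expect to require the most care.
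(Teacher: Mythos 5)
Your overall strategy --- quotient $L * \langle c\rangle$ by a graded system of relators in the style of \cite[Theorem 35.1]{Ol}, with one relator family encoding $W(g_0,g_1,c)=f(g_0,g_1)$ and others forcing simplicity, generation, and the subgroup dichotomy --- is the same as the paper's. But your proposal stops exactly where the lemma's actual content begins: you never exhibit $W$, and you frame the remaining task as showing that the relators $W(g_0,g_1,c)f(g_0,g_1)^{-1}$ have ``only short pieces in common with all other relators,'' a bound you admit is still open and must hold ``independent of $f$.'' Stated that way the task is hopeless, since $f$ is adversarial and each such relator contains the arbitrary letter $f(g_0,g_1)$. The paper's resolution is not a new piece-length estimate but a choice of $W$ that makes these relators instances of machinery already present in \cite[\S 34.3]{Ol}: with parameters $h=\delta^{-1}$, $n$, $n_0=M$ as there, take $W \equiv (yx_0)^{n+3}x_1(yx_0)^{n+7}x_1\cdots x_1(yx_0)^{n+4h-1}$. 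Every word $A = cg_0$ with $g_0\in L\setminus\{1\}$ is a period of rank $2$, so $W(g_0,g_1,c) = A^{n+3}TA^{n+7}\cdots TA^{n+4h-1}$ with $T=g_1$ a single letter, and the imposed relation $(f(g_0,g_1))^{-1}A^{n+3}TA^{n+7}\cdots TA^{n+4h-1}=1$ has exactly the shape of Ol'shanskii's relations of the second type, the arbitrary element $f(g_0,g_1)$ entering only as the short coefficient prefix that such relations already permit. The exponents $n+3,n+7,\ldots$ occupy the residue class $3 \bmod 4$, disjoint from the classes used by the other three second-type families ($n,n+4,\ldots$; $n+1,n+5,\ldots$; $n+2,n+6,\ldots$), which is what prevents the families from interfering; condition R is then checked along the lines of \cite[Lemma 34.14]{Ol}.

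A second genuine error: you propose to impose the law $x^M=1$ by ``working from the start in the variety of groups of exponent $M$.'' That is not available: the graded diagram method cannot be run over a relatively free product in the Burnside variety, and infiniteness of $B(\infty,M)$ is no substitute. In the actual construction the second free factor is taken to be $\langle c\rangle$ cyclic of order $M$ (not infinite cyclic), the first-type relations $A^{n_0}=1$ with $n_0=M$ are imposed rank by rank for each period $A$, and the law in $H$ is a \emph{consequence}: every element is conjugate either into a free factor (where the law holds by hypothesis) or to a power of a period. This rank-by-rank treatment is also what yields clause (f) in the precise form ``cyclic of order dividing $M$,'' and it is what the torsion-free variant (Lemma \ref{correct2} of the paper) deletes to get its version of the statement.
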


\begin{lemma}\label{correct2}  There is a group word $W(x_0, x_1, y)$ such that the following holds.  If $L$ is a nontrivial torsion-free countable group and $f:  (L \setminus \{1_L\})^2 \rightarrow L \setminus \{1_L\}$ is a function, then there exists a countably infinite simple torsion-free group $H$ and $c\in H$ such that

\begin{enumerate}[(a)]

\item $L \leq H$;

\item $c \in H\setminus L$;

\item $H = \langle L \cup \{c\}\rangle$;

\item for all $(g_0, g_1) \in (L \setminus \{1_L\})^2$ we have $W(g_0, g_1, c) = f(g_0, g_1)$;

\item if $h_0 \in L \setminus \{1_L\}$ and $h_1 \in H \setminus L$ then $H = \langle h_0, h_1 \rangle$;

\item every proper subgroup of $H$ is either cyclic or is conjugate in $H$ to a subgroup of $L$; and

\item if $h \in H \setminus L$ then $L \cap hLh^{-1} = \{1_H\}$ and if $h \in H$ is not conjugate to an element of  $L$ then the centralizer in $H$ of $h$ is cyclic and malnormal in $H$.

\end{enumerate}

\end{lemma}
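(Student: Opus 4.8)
The plan is to construct $H$ as an iterated small-cancellation quotient of the torsion-free base group $P := L * \langle c\rangle$, where $\langle c\rangle \cong \mathbb{Z}$, following the graded-presentation machinery of Chapter~7 of \cite{Ol} (the engine behind \cite[Theorem 35.1]{Ol}) and augmenting it with relators that encode the function $f$. A free product of torsion-free groups is torsion-free, so $P$ is torsion-free, and it is hyperbolic relative to the free factor $L$; this is the setting in which van~Kampen diagrams over a graded presentation can be analysed. I would define $H$ as the direct limit of quotients $P = G(0) \twoheadrightarrow G(1) \twoheadrightarrow \cdots$, where $G(i)$ is obtained from $G(i-1)$ by adjoining relators of rank $i$ satisfying the relevant small-cancellation condition \emph{relative to} $G(i-1)$, the relators of all ranks being supplied by one bookkeeping enumeration interleaving the families below. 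Decisively, in this torsion-free variation no relator is a proper power and no Burnside relator $x^{M}$ is imposed; this is exactly what distinguishes the construction from that of Lemma~\ref{correct} and what keeps $H$ torsion-free.

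Before anything else I must fix the word $W$, which may not depend on $L$ or $f$. I would take $W(x_0,x_1,y)$ to have a long, highly aperiodic $y$-skeleton with $x_0$ and $x_1$ inserted so that, after substituting $y = c$ and arbitrary $g_0, g_1 \in L\setminus\{1_L\}$, the word $R_{g_0,g_1} := W(g_0,g_1,c)\,f(g_0,g_1)^{-1}$ is cyclically reduced in $P$ with its $c$-syllables dominating its length; the precise exponent pattern is chosen in tandem with the rank assignment, as explained below. The four families of relators are then: the \emph{(d)-relators} $R_{g_0,g_1}$, one for each pair $(g_0,g_1) \in (L\setminus\{1_L\})^2$, forcing property~(d); \emph{simplicity relators}, expressing (for pairs of nontrivial elements enumerated along the construction) one as a product of conjugates of the other, so that in the limit the normal closure of each nontrivial element is all of $H$; \emph{generation relators} for~(e), expressing a generating set in terms of $h_0$ and $h_1$ for each nontrivial $h_0 \in L$ and each element $h_1$ appearing outside $L$ along the construction; and \emph{structural relators} yielding the subgroup dichotomy~(f) and the malnormality of~(g), introduced as in \cite[Theorem 35.1]{Ol}.

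Granting the presentation, the conclusions follow from the diagram calculus. Properties (b), (c) hold by construction, and (a) holds because the graded small-cancellation condition implies that no nontrivial element of $L$ becomes trivial in $H$ (a reduced diagram over the relators with boundary labelled in $L$ must be empty). Property~(d) holds because the $R_{g_0,g_1}$ are imposed and the diagram analysis shows they force no collapse making $f$ inconsistent; in particular each $f(g_0,g_1)$ remains nontrivial in $H$. Torsion-freeness follows from the standard fact that a graded small-cancellation quotient of a torsion-free group by relators none of which is a proper power is again torsion-free. Simplicity follows from the simplicity relators, while the subgroup description~(f) comes from the structural analysis: an $H$-element not conjugate into $L$ behaves loxodromically, its powers controlling the geometry, so a subgroup containing such an element and not reducing to a cyclic case must be all of $H$, whereas a subgroup avoiding such elements is conjugate into $L$. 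Properties~(e) and~(g) are read off from the same analysis, the centraliser of a non-$L$-conjugate element being cyclic and malnormal.

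The main obstacle is the simultaneous small-cancellation verification for the \emph{entire} family $\{R_{g_0,g_1}\}$, which is the genuinely new difficulty beyond \cite[Theorem 35.1]{Ol}. Since every $R_{g_0,g_1}$ is built from the same $W$, all of them share a common $c$-skeleton; if two distinct pairs agreed on a long initial block (e.g.\ the same first $L$-syllable) or a long terminal block (e.g.\ the same value of $f$), the resulting long piece would break the small-cancellation condition and endanger both the embedding of $L$ and the absence of torsion. This is resolved by exploiting the graded nature of the presentation: pieces are measured not in $P$ but in the current quotient $G(i-1)$, so by assigning each $R_{g_0,g_1}$ a sufficiently high rank and choosing the $c$-exponent pattern of $W$ appropriately, every overlap — between two distinct (d)-relators, between a (d)-relator and its cyclic conjugates and inverse, and between a (d)-relator and the structural relators — is forced to be short in $G(i-1)$, even though the underlying $c$-blocks are long in $P$. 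Once this single combinatorial estimate is secured, the remaining bookkeeping is routine and the limit theorems of \cite{Ol} deliver all of (a)--(g).
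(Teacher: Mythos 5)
Your skeleton coincides with the paper's: $H$ is built as a direct limit of graded small-cancellation quotients of $L * \langle c\rangle_{\infty}$ in the framework of \cite[Chapter 11, $\mathsection$ 34--35]{Ol} (not Chapter 7), property (d) is forced by relators of the form $W(g_0,g_1,c)f(g_0,g_1)^{-1}$, and torsion-freeness is obtained by never imposing power relations --- indeed the paper's proof of Lemma \ref{correct2} consists of rerunning the proof of Lemma \ref{correct} with $\langle c\rangle$ infinite cyclic and the relations of the first type ($A^{n_0}=1$) deleted. The genuine gap lies in how you propose to verify the small-cancellation condition (condition $R$) for the family $\{R_{g_0,g_1}\}$, which you rightly single out as the crux.

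Your proposed mechanism is backwards. In the graded framework a piece of rank $i$ is a common subword measured up to equality in $G(i-1)$, and $G(i-1)$ is a quotient of $P = L * \langle c\rangle$; equality in $G(i-1)$ is coarser than equality in $P$, so passing to higher rank can only create more and longer pieces, never fewer. An overlap that is long in $P$ stays long modulo $G(i-1)$; nothing is ``forced to be short in $G(i-1)$ even though the underlying $c$-blocks are long in $P$.'' Moreover, you cannot ``assign each $R_{g_0,g_1}$ a sufficiently high rank'': in this machinery the rank of a relator is dictated by the length of its period, and since $W$ is fixed once and for all, each $R_{g_0,g_1}$ is a product of powers of the length-two period $A = cg_0$ interspersed with single letters, so these relators necessarily enter at rank $2$ --- exactly where the paper puts them (relations of type (V)). The paper's actual solution is the quantitative design of $W$ itself: it is a concatenation of $h = \delta^{-1}$ blocks $(yx_0)^{n+3}, (yx_0)^{n+7}, \dots, (yx_0)^{n+4h-1}$ separated by occurrences of $x_1$, with exponents strictly increasing and lying in a residue class modulo $4$ disjoint from the classes used in the structural relations (II)--(IV). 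Hence a common piece of two distinct such relators (or of a relator with a cyclic shift of itself or of its inverse) is confined to roughly one block, i.e.\ an $O(\delta + \iota)$-fraction of the relator length, which is what condition $R$ demands; non-conjugacy in rank $1$ of the distinct periods $cg_0$ controls overlaps between relators with different $g_0$. Without this (or an equivalent) exponent bookkeeping your outline does not close the gap it identifies. A secondary point: the separate ``simplicity relators'' and ``generation relators'' you posit are not needed and would each require their own condition-$R$ verification; in the paper, Obraztsov's relations (II)--(IV) already deliver simplicity together with properties (e), (f) and (g).
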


\begin{remark}\label{niceArtinian}  Notice that by letting $L$ be a cyclic group of order $M$ we can apply Lemma \ref{correct} to obtain a countably infinite group $H$ satisfying the law $x^M = 1$, such that every proper subgroup is cyclic of order dividing $M$, and every proper subgroup $K$ in $H$ which is not a proper subgroup of a cyclic subgroup is malnormal.  In particular, $H$ is a simple infinite Artinian group satisfying $x^M = 1$.

\end{remark}

What follows is a result of Todor\v{c}evi\'c.

\begin{lemma}\label{thenicefunction}(\cite[$\mathsection$ 4]{Tod})  If $E$ is a set of cardinality $\aleph_1$ then there exists a function $J: E \times E \rightarrow E$ such that for any uncountable $Z \subseteq E$ we have $J(Z \times Z) = E$.
\end{lemma}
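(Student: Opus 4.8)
The plan is to reduce the statement to a strong-coloring fact and then invoke Todorčević's method of minimal walks. Since only cardinality matters, I would identify $E$ with the ordinal $\omega_1 = \aleph_1$ and construct a symmetric coloring $c : [\omega_1]^2 \to \omega_1$ with the property that $c''[Z]^2 = \omega_1$ for every uncountable $Z \subseteq \omega_1$. Setting $J(\alpha,\beta) = c(\{\alpha,\beta\})$ when $\alpha \neq \beta$ and $J(\alpha,\alpha) = 0$ then yields $J(Z \times Z) = \omega_1 = E$ for every uncountable $Z$, as required, so the entire problem is to manufacture such a $c$ within ZFC.

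I would fix, for each countable limit ordinal $\alpha$, a strictly increasing sequence cofinal in $\alpha$ (equivalently a ladder $C_\alpha \subseteq \alpha$ of order type $\omega$), put $C_{\gamma+1} = \{\gamma\}$ and $C_0 = \emptyset$; this $C$-sequence is the only input. For $\alpha < \beta$ one defines the minimal walk $\beta = \beta_0 > \beta_1 > \cdots > \beta_k = \alpha$ by $\beta_{i+1} = \min(C_{\beta_i} \setminus \alpha)$, with upper trace $\{\beta_0, \ldots, \beta_k\}$. From the walk I would build the standard weight function $\rho : [\omega_1]^2 \to \omega$ given by $\rho(\alpha,\beta) = \max_i |C_{\beta_i} \cap \alpha|$, and record its two decisive features: (i) for every $\beta$ and $n$ the level set $\{\xi < \beta : \rho(\xi,\beta) \le n\}$ is finite, and (ii) $\rho$ is coherent and satisfies the max-triangle inequalities $\rho(\alpha,\gamma) \le \max\{\rho(\alpha,\beta),\rho(\beta,\gamma)\}$ and $\rho(\alpha,\beta) \le \max\{\rho(\alpha,\gamma),\rho(\beta,\gamma)\}$ for $\alpha<\beta<\gamma$. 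The coloring $c$ is then the square-bracket operation read off the walk, designed so that the ordinal $c(\alpha,\beta)$ is a specific node of the trace selected by the oscillation of $\rho(\cdot,\alpha)$ against $\rho(\cdot,\beta)$.

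The heart of the proof is surjectivity on uncountable squares, which I would approach by uniformization. Given an uncountable $Z$ and a target $\gamma < \omega_1$, I would first thin $Z$ to an uncountable $Z'$ on which the walks are uniform: a pressing-down (Fodor) argument fixes, for a stationary set of $\alpha \in Z$, a common terminal behavior of walks from large ordinals down through $\alpha$, and a $\Delta$-system and counting argument aligns the finitely many relevant trace-nodes and $\rho$-values. On this uniform set I would then exploit property (i), which forces the functions $\rho(\cdot,\beta)$ to oscillate unboundedly as $\beta$ ranges over $Z'$, and use this oscillation to locate a pair $\alpha < \beta$ in $Z'$ whose walk is steered so that the square-bracket value equals the prescribed $\gamma$. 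Carrying this out for every $\gamma$ gives $c''[Z]^2 = \omega_1$.

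The main obstacle is exactly this last control step: extracting from the purely $\omega$-valued, finite-to-one data $\rho$ a coloring that nonetheless attains every one of the $\aleph_1$ ordinals on every uncountable square. The delicate point is that the thinning of $Z$ must be carried out without destroying the freedom needed to hit an arbitrary $\gamma$; the resolution is Todorčević's oscillation analysis, in which the finiteness of the level sets in (i) guarantees that within any uncountable uniform family the relevant walk-nodes range over a cofinal, hence (after the square-bracket normalization) all-inclusive, set of ordinals. I would follow \cite[\S 4]{Tod} for the precise definition of the square-bracket operation and the oscillation bookkeeping, since these computations, though elementary, are intricate.
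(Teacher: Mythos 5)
The paper offers no proof of this lemma at all: it is quoted as Todor\v{c}evi\'c's theorem with a pointer to \cite[$\mathsection$ 4]{Tod}, and the introduction makes clear it is just a repackaging of the strong coloring theorem $\aleph_1 \not\rightarrow [\aleph_1]^2_{\aleph_1}$. Your opening reduction --- identify $E$ with $\omega_1$, take a coloring $c$ of pairs with $c''[Z]^2 = \omega_1$ for every uncountable $Z$, symmetrize and send the diagonal to $0$ --- is correct and is exactly how the paper intends the lemma to be read, so at the level of ``which theorem is being invoked'' you and the paper agree.

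However, in the part of your plan where you commit to the internals of the walks argument, the decisive step is misstated and, as written, would fail. The square-bracket operation does \emph{not} attain every ordinal on every uncountable square: what Todor\v{c}evi\'c proves is that for every uncountable $Z \subseteq \omega_1$ the set of ordinals \emph{not} of the form $[\alpha\beta]$ with $\alpha, \beta \in Z$ is nonstationary (equivalently, the set of attained values contains a club). Converting this into a coloring attaining \emph{all} of $\omega_1$ requires a further ingredient your sketch never mentions: a decomposition of $\omega_1$ into $\aleph_1$ pairwise disjoint stationary sets $\{S_\xi\}_{\xi < \omega_1}$ (Ulam/Solovay), and the composed coloring $c = h \circ [\cdot\,\cdot]$, where $h(\delta) = \xi$ if and only if $\delta \in S_\xi$; since the attained set contains a club, it meets every $S_\xi$, so $c$ is onto $\omega_1$. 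Your phrase ``cofinal, hence (after the square-bracket normalization) all-inclusive'' is not a valid substitute: a cofinal set of attained values can be disjoint from all but one stationary set (e.g.\ the successor ordinals are cofinal and avoid the club of limits), so cofinality buys nothing here --- the property one must prove and use is ``nonstationary complement,'' which is strictly stronger. A smaller inaccuracy: the exact triangle inequalities you list in (ii) are properties of Todor\v{c}evi\'c's function $\rho$, which is defined by a more elaborate recursion precisely to force subadditivity; for the maximal-weight function $\rho_1(\alpha,\beta) = \max_i |C_{\beta_i} \cap \alpha|$ that you actually define, they hold only up to finite error terms. Since you ultimately defer to \cite[$\mathsection$ 4]{Tod}, these defects are repairable from the reference, but the surjectivity step of the plan as you stated it is not correct.
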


\begin{proof}[Proof of Theorem \ref{maintheorem}]  We will first produce the group $G$ satisfying (5) (a) in the statement of the theorem.  Fix an odd natural number $M$ as in Lemma \ref{correct}.  Take $E$ to be a set of cardinality $\aleph_1$ and let $E = \{e_{\delta}\}_{\delta \in \aleph_1}$ be an enumeration of its elements.  Let $J: E \times E \rightarrow E$ be as in Lemma \ref{thenicefunction}.  We define a collection $\{E_{\alpha}\}_{\alpha \in \aleph_1}$ of subsets of $E$ such that

\begin{itemize}

\item $E = \bigcup_{\alpha \in \aleph_1} E_{\alpha}$;

\item $E_{\alpha} \subseteq E_{\alpha'}$ when $\alpha < \alpha'$;

\item $E_{\alpha}$ is countably infinite for each $\alpha  \in \aleph_1$;

\item $E_{\alpha + 1} \setminus E_{\alpha}$ is countably infinite for each $\alpha \in \aleph_1$;

\item $E_{\alpha} = \bigcup_{\beta < \alpha} E_{\beta}$ whenever $\alpha \neq 0$ is a limit ordinal; and

\item $J(E_{\alpha} \times E_{\alpha}) \subseteq E_{\alpha}$ for each $\alpha \in \aleph_1$.

\end{itemize}

\noindent We let $E_{0, 0} = \{e_{\delta}\}_{\delta \in \omega}$ and take $E_{0, n+1} = E_{0, n} \cup J(E_{0, n} \times E_{0, n})$ for each $n \in \omega$.  Take $E_0 = \bigcup_{n \in \omega} E_{0, n}$.  All of the required properties so far have been satisfied.  If $\alpha \in \aleph_1 \setminus \{0\}$ is a limit ordinal then we let $E_{\alpha} = \bigcup_{\beta < \alpha} E_{\beta}$.  If $E_{\alpha}$ has been defined and $E_{\alpha + 1}$ has not yet been defined, then as $E_{\alpha}$ is countable we select $\epsilon_{\alpha} \in \aleph_1$ with $\epsilon_{\alpha} > \alpha$ and such that $\epsilon_{\alpha} \setminus \sup \{\delta \in \aleph_1 \mid e_{\delta} \in E_{\alpha}\}$ is infinite.  Set $E_{\alpha + 1, 0} = \{e_{\delta}: \delta < \epsilon_{\alpha}\}$ and take $E_{\alpha + 1, n+1} = E_{\alpha + 1, n} \cup J(E_{\alpha + 1, n} \times E_{\alpha + 1, n})$ for each $n \in \omega$, and $E_{\alpha + 1} = \bigcup_{n \in \omega} E_{\alpha + 1, n}$.  The check that the properties have been satisfied is straightforward.

Let us add a new element to $E$, take $x_0 \notin E$ and define $X$ to be the disjoint union $E \sqcup \{x_0\}$ and for each $\alpha \in \aleph_1$ let $X_{\alpha} = E_{\alpha} \sqcup \{x_0\}$.  We will inductively define a group structure on $X$ via group structures $G_{\alpha}$ on $X_{\alpha}$ in such a way that $G_{\alpha}$ is a subgroup of $G_{\alpha'}$ whenever $\alpha < \alpha'$.  By Remark \ref{niceArtinian} let $X_0$ be given a group structure $G_0$ so that $G_0$ is a countably infinite simple Artinian group satisfying the group law $x^M = 1$, and the element $x_0$ corresponds to the identity element of $G_0$, such that any proper subgroup of $G_0$ is .

Suppose we have already given group structures $G_{\beta}$ to each $X_{\beta}$ with $\beta < \alpha$ in such a way that $G_{\beta}$ is a subgroup of $G_{\beta'}$ whenever $\beta < \beta'$.  If $\alpha \neq 0$ is a limit ordinal then we give $X_{\alpha}$ the unique group structure $G_{\alpha}$ so that $G_{\beta}$ is a subgroup of $G_{\alpha}$ when $\beta < \alpha$.  If $\alpha$ is a successor ordinal, say $\alpha = \gamma + 1$, then select an element $c_{\gamma} \in X_{\alpha} \setminus X_{\gamma}$ and as $X_{\alpha} \setminus X_{\gamma}$ is countably infinite we apply Lemma \ref{correct} to endow $X_{\alpha}$ with a group structure $G_{\alpha}$ such that

\begin{enumerate}[(i)]

\item $G_{\gamma}$ is a subgroup of $G_{\alpha}$;

\item $G_{\alpha} = \langle G_{\gamma} \cup \{c_{\gamma}\}\rangle$;

\item for all $g_0, g_1 \in G_{\gamma} \setminus \{x_0\}$ we have $W(g_0, g_1, c_{\gamma}) = J(g_0, g_1)$;

\item if $g_0 \in G_{\gamma} \setminus \{x_0\}$ and $g_1 \in G_{\alpha} \setminus G_{\gamma}$ then $G_{\alpha} = \langle g_0, g_1 \rangle$;

\item the group $G_{\alpha}$ satisfies the law $x^M = 1$ and every proper subgroup of $G_{\alpha}$ is either cyclic of order dividing $M$ or is conjugate to a subgroup of $G_{\gamma}$;

\item the group $G_{\alpha}$ is simple; and

\item if $g \in G_{\alpha} \setminus G_{\gamma}$ then $G_{\gamma} \cap gG_{\gamma}g^{-1} = \{1_{G_{\alpha}}\}$ and if $g$ is not conjugate to an element in $G_{\gamma}$ then the centralizer of $g$ is cyclic and malnormal in $G_{\alpha}$.
\end{enumerate}

\noindent Finally, we endow $X = \bigcup_{\alpha \in \aleph_1} X_{\alpha}$ with the unique group structure $G$ which is given by the group structures $G_{\alpha}, \alpha < \aleph_1$, so that each $G_{\alpha}$ is a subgroup of $G$.

We must check that the group $G$ satisfies the desired properties.  Let $Y \subseteq G$ be uncountable and without loss of generality $Y \cap G_0 = \emptyset$.  Define function $Q: Y \rightarrow \aleph_1$ by letting $Q(g) = \min\{\alpha <\aleph_1 \mid g \notin G_{\alpha}\}$ (the set of which we are taking a minimum is nonempty since we have excluded elements of $G_0$ from $Y$).  By how the group structure on $G_{\alpha}$ was defined when $\alpha$ is a limit ordinal we see that $Q(y)$ is always a successor ordinal.  Also, as $Y$ is an uncountable subset of $X$ and $X_{\alpha}$ was countable for each $\alpha \in \aleph_1$, we see that $Q(Y)$ is uncountable (and therefore unbounded in $\aleph_1$).

We notice that for every $\alpha \in Q(Y) \setminus \min (Q(Y))$ we have some natural number $k_{\alpha}$ such that $c_{\alpha} \in Y^{[k_{\alpha}]}$.  To see this, we fix $\alpha \in Q(Y) \setminus \min(Q(Y))$ and select $g_0 \in Y$ with $Q(g_0)$ minimal and $g_1 \in Y$ such that $Q(g_1) > \alpha + 1$.  We have $c_{\alpha} \in G_{\alpha + 1} \leq G_{Q(g_1) - 1} = \langle g_0, g_1 \rangle$ by condition (iv) above.  Since $\langle g_0, g_1 \rangle = \bigcup_{k \in \omega} \{g_0, g_1\}^{[k]}$ we see that $c_{\alpha} \in \{g_0, g_1\}^{[k]} \subseteq Y^{[k]}$ for some $k = k_{\alpha} \in \omega$.

Next we notice that $Y^{[n]} \cap \{c_{\alpha}\}_{\alpha \in Q(Y)}$ is uncountable for some $n \in \omega$.  To see this, note that $\bigcup_{n \in \omega} Y^{[n]} \supseteq \{c_{\alpha}\}_{\alpha \in Q(Y) \setminus \min(Q(Y))}$ by the argument in the preceeding paragraph.  Were it the case that $Y^{[n]} \cap \{c_{\alpha}\}_{\alpha \in Q(Y)}$ is countable for every $n$ then we would have $\{c_{\alpha}\}_{\alpha \in Q(Y) \setminus \min(Q(Y))}$ countable, as a countable union of the countable sets $Y^{[n]} \cap \{c_{\alpha}\}_{\alpha \in Q(Y) \setminus \min(Q(Y))}$, but this is absurd.  Thus $Y^{[n]} \cap \{c_{\alpha}\}_{\alpha \in Q(Y)}$ is uncountable for some $n \in \omega$ as required.

Now since $Y^{[n]} \cap \{c_{\alpha}\}_{\alpha \in Q(Y)}$ is uncountable we let $g \in G$ be given.  If $g = 1_G = x_0$ then it is clear that $g \in Y^{[1]}$, so we may assume that $g \in G \setminus \{1_G\}$.  Select $c_{\alpha_0}, c_{\alpha_1} \in Y^{[n]} \cap \{c_{\alpha}\}_{\alpha \in Q(Y)}$ such that $J(c_{\alpha_0}, c_{\alpha_1}) = g$.  Select $c_{\alpha_2} \in Y^{[n]} \cap \{c_{\alpha}\}_{\alpha \in Q(Y)}$ with $\alpha_2 >  \alpha_0, \alpha_1$ and we have $W(c_{\alpha_0}, c_{\alpha_1}, c_{\alpha_2}) = g$.  Thus letting $n_Y = n\cdot\Length(W)$ we have obtained the required constant.  Thus conditions (1) and (2) of the theorem hold.  In particular, $G$ is a J\'onsson group.

\begin{remark} \label{properissmall} We point out that if $\alpha < \aleph_1$ is a limit ordinal and $K$ is a proper subgroup of $G_{\alpha}$, then $K$ is a subgroup of $G_{\beta}$ for some $\beta < \alpha$.  Supposing for contradiction that this fails, we let $g \in G_{\alpha}$ be given and select $\beta_0 < \alpha$ and $g_0, g_1 \in K$ such that $g \in G_{\beta_0}$, $g_0 \in G_{\beta_0} \setminus \{1_G\}$ and $g_1 \in K \setminus G_{\beta_0}$.  Then we have $g \in G_{\beta_0} \leq \langle g_0, g_1 \rangle \subseteq K$ by (iv).  Thus $K = G_{\alpha}$ as $g$ was arbitrary, contradicting the assumption that $K$ is a proper subgroup of $G_{\alpha}$.
\end{remark}

The group $G$ is simple as a direct limit of simple groups.  That each subgoup of $G$ is either simple or cyclic is proven by induction.  We have that $G_0$ is simple and every proper subgroup of $G_0$ is cyclic.  Suppose that for all $\beta < \alpha$ every subgroup of each $G_{\beta}$ is either simple or cyclic.  If $\alpha < \aleph_1$ is such that $\alpha = \gamma + 1$, then by condition (v) we know that all subgroups of $G_{\alpha}$ are either cyclic or are conjugate to a subgroup of $G_{\beta}$ (and hence either simple or cyclic).  If $\alpha < \aleph_1$ is a limit ordinal then we know by condition (vi) that $G_{\alpha}$ is simple, and by Remark \ref{properissmall} any proper subgroup of $G_{\alpha}$ is a subgroup of some $G_{\beta}$, with $\beta < \alpha$, we see that the claim holds for $G_{\alpha}$ in this case as well.  Thus for all $\alpha < \aleph_1$, each subgroup of $G_{\alpha}$ is either cyclic or simple.  As we know that $G$ is simple, and by (1) and (2) any proper subgroup $K$ of $G$ is countable and therefore a subgroup of some $G_{\alpha}$, we conclude that any subgroup of $G$ is either cyclic or simple and property (3) holds.

To see that property (4) holds we first prove by induction that: if $K$ is a proper subgroup of $G_{\alpha}$ and not a proper subgroup of a cyclic subgroup of $G_{\alpha}$ then $K$ is malnormal in $G_{\alpha}$.  This holds for $G_0$ by design.  Suppose that the claim holds for $G_{\beta}$ for each $\beta < \alpha$.  Suppose that $\alpha = \gamma + 1$ and that $K$ is a proper subgroup of $G_{\alpha}$ and not a proper subgroup of a cyclic subgroup of $G_{\alpha}$.  If $K$ is conjugate to a proper subgroup of $G_{\beta}$ then $K$ is malnormal in $G_{\alpha}$ since $G_{\gamma}$ is malnormal in $G_{\alpha}$ (by (vii)) and $K$ is malnormal (by induction) in $G_{\gamma}$.  If $K$ is conjugate in $G_{\alpha}$ to the subgroup $G_{\gamma}$ then $K$ is malnormal in $G_{\alpha}$ since $G_{\gamma}$ is malnormal in $G_{\alpha}$ (by (vii)).  By (v) the only other alternative is that $K$ is cyclic, say generated by $h$, and $h$ is not conjugate to an element in $G_{\gamma}$, and in this case we know the centralizer of $h$ includes $K$ (therefore is equal to $K$) and is malnormal in $G_{\alpha}$ by (vii).  Supposing that $\alpha < \aleph_1$ is a limit ordinal and $K$ is a proper subgroup of $G_{\alpha}$ which is not a proper subgroup of a cyclic subgroup of $G_{\alpha}$, we have by Remark \ref{properissmall} that $K$ is a subgroup of $G_{\beta}$ with $\beta < \alpha$.  Then $K$ is malnormal in $G_{\beta'}$ for all $\beta \leq \beta' < \alpha$, which implies that $K$ is malnormal in $G_{\alpha}$.  Thus we have shown the inductive claim.  Certainly $G$ is malnormal as a subgroup of $G$.  If, now, $K$ is a proper subgroup of $G$ and not a proper subgroup of a cyclic subgroup of $G$, then as $G$ is J\'onsson we have that $K$ is countable and a proper subgroup of $G_{\alpha}$ for some $\alpha < \aleph_1$.  Then by induction $K$ is malnormal in $G_{\alpha'}$ for all $\alpha \leq \alpha' <\aleph_1$ and so $K$ is malnormal in $G$.  Thus property (4) holds.

The law $x^M = 1$ holds in $G$ since it holds in each of the $G_{\alpha}$.

We check by induction that each of the groups $G_{\alpha}$ is Artinian.  The group $G_{0}$ was selected to be Artinian.  If $\alpha = \gamma + 1$ and $G_{\gamma}$ is Artinian then by construction the group $G_{\gamma + 1}$ is also Artinian, since any proper subgroup is either cyclic of order dividing $M$ or is conjugate to a subgroup of the Artinian group $G_{\gamma}$.  If $\alpha < \aleph_1$ is a limit ordinal then by Remark \ref{properissmall} any proper subgroup $K$ of $G_{\alpha}$ is a subgroup of $G_{\beta}$ for some $\beta < \alpha$, so $G_{\alpha}$ is Artinian.  Thus each $G_{\alpha}$ is Artinian.  Since $G$ is J\'onsson we know that any proper subgroup $K$ of $G$ must be countable, and so $K$ is a subgroup of an Artinian group $G_{\alpha}$.  Thus $G$ is itself Artinian.  Thus we have (5) (a) and the proof in this case is complete.

The torsion-free construction in case (5) (b) is comparable.  We give $G_0$ an infinite cyclic group structure and utilize Lemma \ref{correct2} instead of Lemma \ref{correct} at each successor stage, replacing hypothesis (v) with

\begin{enumerate}

\item[(v$^*$)] the group $G_{\alpha}$ is torsion-free and every proper subgroup of $G_{\alpha}$ is either cyclic or is conjugate to a subgroup of $G_{\gamma}$.

\end{enumerate}

\noindent The check that the obtained group $G$ is simple and has properties (2), (3), and (4) is unchanged, and that $G$ is torsion-free follows from the fact that it is the increasing union of torsion-free groups.  That every strictly descending chain of non-cyclic subgroups is finite is seen by a minor modification to the check of the Artinian property in (5) (a).

\end{proof}

It remains to prove Lemmas \ref{correct} and \ref{correct2}.  These embedding lemmas are proved by giving minor modifications to the beautiful proof of \cite[Theorem 35.1]{Ol} originally obtained by Obraztsov, which was a modification of the proof of \cite[Theorem 28.1]{Ol}.  We shall assume familiarity with \cite[Chapter 11]{Ol}, especially $\mathsection$ 34 and $\mathsection$ 35, and utilize the notions and notation therein (including a full account of that machinery into the current paper would add a considerable number of pages).

\begin{proof}[Proof of Lemma \ref{correct}]  We begin by fixing positive real numbers

\begin{center}

$\delta >  \eta > \iota$

\end{center}

\noindent which are sufficiently small for standard graded diagram arguments, as well as integers $h = \delta^{-1}$, $d = \eta^{-1}$, $n = \iota^{-1}$, and $n_0 = M$ an odd integer with $n = \lfloor (h+1)^{-1} \cdot n_0\rfloor$, where $\lfloor r \rfloor$ denotes the largest integer less than or equal to $r$.  The word $W(x_0, x_1, y)$ is the following

$$W \equiv (zx_0)^{n + 3}x_1(zx_0)^{n + 7}x_1 \cdots x_1(zx_0)^{n + 4h - 1}.$$

Assume the hypotheses of the lemma.  We let $G_1$ be the group $L$ and $G_2$ be the finite cyclic group $\langle c \rangle_{M}$ of order $M$.  We define a presentation over the free product $G_1 * G_2$ satisfying condition $R$.  The relations are defined in the alphabet $\mathcal{A}$ which is the set of all nontrivial elements from $G_1$  and $G_2$. Below the definition is given by induction on rank $i\geq 0$ simultaneously with the definitions of simple words of rank $i$, periods of rank $i$, and groups $G(i)$.

There are neither periods nor relations in ranks 0 and 1.  (The reason is that we do not want to ruin $G_1$ and $G_2$.)  The groups $G(0)$ and $G(1)$ of ranks $0$ and $1$, respectively, are just the free product $G_1*G_2$.  The simple words in rank $i=0$, $1$ are all the words which are not conjugate to powers of shorter words in rank $i$, i.e. in the group $G(i)$.  For example, every word $A \equiv cg$, where $g\in G_1 \setminus \{1\}$, has length $2$ and is simple in rank $1$ since it is not conjugate with an element of the free factors  $G_1$, $G_2$.

Assume that $i \geq 2$ and the above mentioned concepts are defined for ranks $\leq i-1$.  The set $\mathfrak{X}_i$ of periods of rank $i$ is defined as a set of simple in rank $i - 1$ words of length $i$, which is maximal with respect to the following property: if $A$, $B$ are different words from $\mathfrak{X}_i$, then A is not conjugate with $B^{\pm 1}$ in rank $i - 1$ (i.e. in $G(i - 1)$).  For example, all the words $A(g) \equiv cg$, $g\in G_1\setminus \{1\}$ can be included in $\mathfrak{X}_2$, since they are not conjugate in $G(1)$ for different $g$s.

Now we introduce the set $\mathcal{S}_i$ of new relations as follows.  First for each $A \in \mathfrak{X}_i$ we introduce the relation

$$
A^{n_0} = 1. \eqno{\text{(I)}}
$$

\noindent  These are \textit{relations of the first type} in the terminology of \cite[Chapter 11]{Ol}.  Next we introduce relations of the second type.  For each $A \in \mathfrak{X}_i$ we fix a maximal set $\mathcal{Y}_A$ so that

\begin{enumerate}

\item if $T \in \mathcal{Y}_A$ then $1 \leq \|T\| < d\|A\|$ (here $\| \cdot \|$ denotes word length in the alphabet $\mathcal{A}$), and

\item each double coset $\langle A \rangle, \langle A \rangle$ in $G(i - 1)$ contains at most one word in $\mathcal{Y}_A$, and this word is of minimal length among representatives of this coset.

\end{enumerate}

\noindent If $a_1 \notin \langle A \rangle \leq G(i - 1)$ then for $T \in \mathcal{Y}_A \setminus \langle A \rangle a_1 \langle A \rangle \subseteq G(i - 1)$ we add relation

$$
a_1TA^{n} T A^{n + 4} \cdots TA^{n + 4h - 4} = 1.  \eqno{\text{(II)}}
$$

\noindent If $a_2 \notin \langle A \rangle \leq G(i - 1)$ and $a_2 \notin \langle A \rangle a_1 \langle A \rangle$ then for $T \in \mathcal{Y}_A \setminus \langle A \rangle a_2 \langle A \rangle$ we add relation 

$$
a_2TA^{n + 1} T A^{n + 5} \cdots TA^{n + 4h - 3} = 1.  \eqno{\text{(III)}}
$$

\noindent Next, letting $s$ be the maximum number of letters occuring in the expression of $A$, if $a_{s+1}$ exists and $a_{s + 1} \notin \langle A \rangle \langle a_1, a_2 \rangle \langle A \rangle \subseteq G(i - 1)$ then for $T \in \mathcal{Y}_A \setminus \langle A \rangle a_{s + 1} \langle A \rangle$ we add relation

$$
a_{s + 1}TA^{n + 2} T A^{n + 6} \cdots TA^{n + 4h - 2} = 1.  \eqno{(\text{IV})}
$$

\noindent Finally if $i = 2$, for each $A \in \mathfrak{X}_2$ of form $A = cg$, with $g \in G_1 \setminus \{1\}$, and each $T = g' \in G_1 \setminus \{1\}$ we add relation

$$
(f(g, g'))^{-1}A^{n + 3} T A^{n + 7} \cdots TA^{n + 4h - 1} = 1  \eqno{\text{(V)}}
$$

\noindent where $f$ is the abstract function in the statement of the lemma.  We take $\mathcal{S}_i$ to be this new set of relations and write $\mathcal{R}_i = \mathcal{R}_{i - 1} \cup \mathcal{S}_i$.  The group $G(i)$ is defined as the factor group $G_1 * G_2/ \langle \langle \mathcal{R}_i \rangle \rangle$, and to complete the inductive definitions, we say that a simple word of rank $i \geq 2$ is a word which is not conjugate in $G(i)$ to a power of a shorter word and is not conjugate to any power of a period of rank $j \leq i$.

The sequence of relations $\{\mathcal{R}_i\}_{i \in \omega}$ will satisfy property $R$, proved along the same lines as \cite[Lemma 34.14]{Ol}.  The relations of form (I) - (IV) correspond to (2) - (5) in \cite[$\mathsection$ 34.3]{Ol}, with the exponents on the $A$ slightly altered to suit our purposes.  The proof that $H = G(\infty) = G_1 * G_1/\langle\langle \bigcup_{i \in \omega}\mathcal{R}_i\rangle\rangle$ is countably infinite and simple, and that conditions (e) and (f) hold, follows that of \cite[Theorem 35.1]{Ol}.  For property (g), if $h \in H \setminus L$ then $L \cap hLh^{-1} = \{1_H\}$ by \cite[Lemma 34.11]{Ol}.  If moreover $h$ is not conjugate to an element of $L$ then either $h$ is conjugate to a nontrivial element of $\langle c \rangle_{M}$, in which case without loss of generality $h$ is centralized by the group $\langle c \rangle_{M}$ which is malnormal in $H$ by \cite[Lemma 34.11]{Ol}, or $h$ is conjugate to a power of a period \cite[Lemma 34.7 (3)]{Ol} so without loss of generality $h$ is a nontrivial power of period $A$ and $\langle A \rangle$ clearly centralizes $h$ and is malnormal in $H$ by combining \cite[Lemma 34.7 (2) and (4)]{Ol} with \cite[Lemma 34.9]{Ol}.  Properties (a), (b), and (c) for the group $H$ are clear by construction and property (d) holds because of the relations of form (V).
\end{proof}

\begin{proof}[Proof of Lemma \ref{correct2}]  This follows by applying minor alterations to the proof of Lemma \ref{correct}.  Fix parameters $\delta >  \eta > \iota$ as before.  Assume the hypotheses of the lemma.  Let $G_1$ be the group $L$ and $G_2$ be the infinite cyclic group $\langle c \rangle_{\infty}$.  Let $G(0) = G(1) = G_1 * G_2$.  As in Lemma \ref{correct} we do not have periods or relators of ranks $0$ and $1$.  Similarly we define simple words in ranks $0$ and $1$ to be those words which are not conjugate to powers of shorter words in the group $G(0) = G(1) = G_1 * G_2$.

Assume we have defined periods of rank $j$, sets of relations $\mathcal{R}_j$, and simple words in rank $j$ for all $j < i$ and that $1 < i$.  We select the set of periods $\mathfrak{X}_i$ of rank $i$ precisely as was done in Lemma \ref{correct}.  Define the set $\mathcal{S}_i$ of new relations as before, except do not add any relations of the first type (i.e. those in (I)).  Write $\mathcal{R}_i = \mathcal{R}_{i - 1} \cup \mathcal{S}_i$ and $G(i) = G_1 * G_2/\langle\langle \mathcal{R}_i\rangle\rangle$.  It is clear that the elimination of relations of the first type in rank $i$ changes $G(i)$ and the content of the next inductive definitions, so all the sets of relations of the next ranks changes in comparison with the torsion case.  As before, define a simple word of rank $i$ to be a word which is not conjugate in $G(i)$ to a power of a shorter word and is not conjugate to any power of a period of rank $j \leq i$.

By the same argument as before, the sequence $\{\mathcal{R}_i\}_{i \in \omega}$ satisfies condition $R$ and $H = G(\infty) = G_1 * G_2/\langle\langle \bigcup_{i \in \omega} \mathcal{R}_i\rangle\rangle$ has the desired properties using the same word $W$.
\end{proof}

\end{section}

\end{document}